\newtheorem{thm}{Theorem}[section]
\title{\bf On central fields in the calculus of variations }
\author{ Fabio Silva Botelho \\ Department of Mathematics \\ Federal University of Santa Catarina - UFSC \\
Florian\'{o}polis, SC - Brazil}
\begin{document}

\maketitle
\abstract{ This article develops sufficient conditions of local optimality for the scalar and
vectorial cases of the calculus of variations. The results are established through the construction of stationary fields which keep invariant what we define as
the  generalized Hilbert  integral. }

\section{Introduction} In this short communication we develop sufficient conditions of local optimality for a relatively large class of problems  in the calculus of variations. The concerning approach  is developed through a generalization of some theoretical results about central fields presented in
\cite{12a}. We address both the scalar and vectorial cases for a domain  in $\mathbb{R}^n$. Finally the Weierstrass Excess function has a fundamental
role in the formal proofs of the main results.

\section{Central fields for the scalar case in the calculus of variations}

Let $\Omega \subset \mathbb{R}^n$ be an open, bounded, simply connected set with a regular (Lipschitzian) boundary denoted by $\partial \Omega$.

Let $f \in C^1(\overline{\Omega} \times \mathbb{R} \times \mathbb{R}^n)$ ($f(x,y,\mathbf{z})$) and $V=C^1(\overline{\Omega}).$
We suppose $f$ is convex in $\mathbf{z}$, which we denote by $f(\underline{x},\underline{y},\mathbf{z})$  be convex.

 Choose $\tilde{x}_1,\ldots, \tilde{x}_n,\tilde{y}_1,\ldots,\tilde{y}_n \in \mathbb{R}$  such that if $x=(x_1,\ldots,x_n) \in \Omega$, then $$\tilde{x}_k<x_k<\tilde{y}_k \; \forall k \in \{1, \ldots, n\}$$

Suppose that $y_0 \in V_1=C^1(B_0)$ is stationary for $f$ in $B_0=\prod_{k=1}^n [\tilde{x}_k, \tilde{y}_k]$, that is, suppose
$$\sum_{k=1}^n\frac{d}{dx_k} f_{z_k}(x,y_0(x),\nabla y_0(x))=f_y(x,y_0(x),\nabla y_0(x)),\; \forall x \in B_0.$$
We also assume $f$ to be of $C^1$ class in $B_0 \times \mathbb{R} \times \mathbb{R}^n$.

Consider the problem of minimizing  $F:D \rightarrow \mathbb{R}$ where
$$F(y)=\int_\Omega f(x,y(x), \nabla y(x))\;dx,$$ and where
$$D=\{ y \in V\;:\; y=y_0 \text{ in } \partial \Omega\}.$$

Assume there exists a family of functions $\mathcal{F}$, such that for each $(x,y) \in D_1 \subset \mathbb{R}^{n+1}$,
there exists a unique stationary function $y_3 \in V_1=C^1(B_0)$ for $f$ in $\mathcal{F}$, such that
$$(x,y)=(x,y_3(x)).$$

More specifically, we define $\mathcal{F}$ as a subset of $\mathcal{F}_1$, where
$$\mathcal{F}_1=\{\phi(t,\Lambda(x,y)) \text{ stationary for } f \text{ such that } \phi(x,\Lambda(x,y))=y \text{ for a }
\Lambda \in B_r(0) \subset \mathbb{R}\}$$ where $B_r(0)=(0-r,0+r)$ for some $r>0.$

Here, $\phi$ is stationary and $$\phi((t_1,\ldots,t_k=\tilde{x}_k,\ldots,t_n),\Lambda(x,y))=y_0(t_1,\ldots,t_k=\tilde{x}_k,\ldots,t_n), \text{ on } \partial B_0$$
and $$\nabla\phi((t_1,\ldots,t_k=\tilde{x}_k,\ldots,t_n),\Lambda(x,y))\cdot \mathbf{n}=\Lambda \in \mathbb{R}, \text{ on } \partial B_0,\; \forall k \in \{1,\ldots,n\},$$ where $\mathbf{n}$ denotes the  outward normal field to $\partial B_0$.

Define the field $\theta :D_1 \rightarrow \mathbb{R}^{n}$ by $$\theta(x,y)=\nabla y_3(x),$$ where as above indicated $y_3$ is such that
$$(x,y)=(x,y_3(x)),$$ so that $$\theta(x,y_3(x))=\nabla y_3(x), \; \forall x \in B_0.$$

At this point we assume the hypotheses of the  implicit function theorem so that $\phi(x,\Lambda(x,y))$ is of  $C^1$ class and $\theta(x,y)$ is continuous
(in fact, since $\phi$ is stationary, the partial derivatives of $\theta(x,y)$ are well defined).

Define also $$h(x,y)=f(x,y,\theta(x,y))-\sum_{j=1}^nf_{z_j}(x,y,\theta(x,y)) \theta_j(x,y)$$
and $$P_j(x,y)=f_{z_j}(x,y,\theta(x,y)),\; \forall j \in \{1, \ldots,n\}.$$

Observe that \begin{eqnarray}h_y(x,y_3(x))&=&f_y(x,y_3(x),\theta(x,y_3(x)))+\sum_{j=1}^n f_{z_j}(x,y_3(x),\theta(x,y_3(x))) (\theta_j)_y(x,y_3(x))
\nonumber \\ &&- \sum_{j=1}^n \{(P_j)_y(x,y_3(x))\theta_j(x,y_3(x))+P_j(x,y_3(x))(\theta_j)_y(x,y_3(x))\} \nonumber \\ &=&
f_y(x,y_3(x),\theta(x,y_3(x)))+\sum_{j=1}^n P_j(x,y_3(x)) (\theta_j)_y(x,y_3(x))
\nonumber \\ &&- \sum_{j=1}^n \{(P_j)_y(x,y_3(x))\theta_j(x,y_3(x))+P_j(x,y_3(x))(\theta_j)_y(x,y_3(x))\}
\nonumber \\ &=&f_y(x,y_3(x),\theta(x,y_3(x)))- \sum_{j=1}^n (P_j)_y(x,y_3(x))\theta_j(x,y_3(x)). \end{eqnarray}

Thus, $$h_y(x,y)=f_y(x,y,\theta(x,y))- \sum_{j=1}^n (P_j)_y(x,y)\theta_j(x,y),\; \forall (x,y) \in D_1.$$

On the other hand, since $y_3(x)$ is stationary, we obtain

\begin{eqnarray}
0&=& f_y(x,y_3(x),\nabla y_3(x))-\sum_{j=1}^n\frac{d}{dx_j} f_{z_j}(x,y_3(x),\nabla y_3(x)) \nonumber \\ &=&
h_y(x,y_3(x))+ \sum_{j=1}^n(P_j)_y(x,y)\theta_j(x,y)\nonumber \\ &&-\sum_{j=1}^n\left(\frac{\partial P_j(x,y_3(x))}{\partial x_j}+(P_j)_y(x,y_3(x))\frac{\partial y_3(x)}{\partial x_j}\right)
\nonumber \\ &=&h_y(x,y_3(x))+ \sum_{j=1}^n (P_j)_y(x,y_3(x))\theta_j(x,y_3(x)) \nonumber \\ &&-\sum_{j=1}^n\left(\frac{\partial P_j(x,y_3(x))}{\partial x_j}+(P_j)_y(x,y_3(x))\theta_j(x,y_3(x))\right) \nonumber \\ &=&
 h_y(x,y_3(x))-\sum_{j=1}^n\left(\frac{\partial P_j(x,y_3(x))}{\partial x_j}\right).
 \end{eqnarray}

 Therefore, $$h_y(x,y)=\sum_{j=1}^n\left(\frac{\partial P_j(x,y)}{\partial x_j}\right), \forall (x,y) \in D_1$$

 Let $H_j(x,y)$ be such that $$\frac{\partial H_j(x,y)}{\partial y}=P_j(x,y).$$

From these two last lines, we get

$$h_y(x,y)=\sum_{j=1}^n\left(\frac{\partial (H_j)_y(x,y)}{\partial x_j}\right)=\left(\sum_{j=1}^n\frac{\partial H_j(x,y)}{\partial x_j}\right)_y,\; \forall (x,y) \in D_1$$
so that
$$h(x,y)=\sum_{j=1}^n\frac{\partial H_j(x,y)}{\partial x_k}+W(x),$$
for some $W:B_0 \rightarrow \mathbb{R}.$

Hence, assuming $D_1$ contains an open which contains $\overline{\mathcal{C}_0}$, where $\mathcal{C}_0= \{(x,y_0(x))\;:\; x \in \Omega\}$, for $y \in D$ sufficiently close to $y_0$ in $L^\infty$ norm, the generalized  Hilbert integral, denoted by $I(y)$, will be defined by
\begin{eqnarray}
I(y)&=& \int_\Omega h(x,y(x))\;dx+ \sum_{j=1}^n P_j(x,y(x))\frac{\partial y(x)}{\partial x_j}\;dx \nonumber \\ &=&
\int_\Omega \sum_{j=1}^n\left(\frac{\partial H_j(x,y(x))}{\partial x_j}+(H_j)_y(x,y(x))\frac{\partial y(x)}{\partial x_j}\right)\;dx
+\int_\Omega W(x)\;dx \nonumber \\ &=& \int_\Omega \sum_{j=1}^n \frac{d H_j(x,y(x))}{dx_j}\;dx+\int_\Omega W(x)\;dx
\nonumber \\ &=& \int_{\partial \Omega} \sum_{j=1}^n (-1)^{j+1} H_j(x,y(x))dx_1\wedge\cdots\wedge \widehat{dx_j} \wedge \cdots \wedge dx_n +\int_\Omega W(x)\;dx\nonumber \\
&=& W_1(y|_{\partial \Omega})\nonumber \\ &=& W_1((y_0)|_{\partial \Omega}),
\end{eqnarray}
so that such an integral is invariant, that is, it does not depend on $y.$

Finally, observe that

$$F(y)-F(y_0) =
\int_\Omega f(x,y(x),\nabla y(x))\;dx-\int_\Omega f(x,y_0(x),\nabla y_0(x))\;dx.$$

On the other hand,

\begin{eqnarray}
I(y) &=& \int_\Omega f(x,y(x),\theta(x,y))\;dx\nonumber \\ && +\sum_{j=1}^n\int_\Omega f_{z_j}(x,y(x),\theta(x,y(x))(\theta_j(x,y(x))-y_{x_j}(x))\;dx
\nonumber \\ &=&I(y_0) \nonumber \\ &=&\int_\Omega f(x,y_0(x),\theta(x,y_0(x)))\;dx \nonumber \\ &=& \int_\Omega f(x,y_0(x), \nabla y_0(x))\;dx \nonumber \\ &=& F(y_0).
\end{eqnarray}

Therefore,
\begin{eqnarray}
F(y)-F(y_0)&=& \int_\Omega [f(x,y(x),\nabla y(x)) -f(x,y(x),\theta(x,y))]\;dx
\nonumber \\ &&-\sum_{j=1}^n\int_\Omega f_{z_j}(x,y(x),\theta(x,y(x))(y_{x_j}(x)-\theta_j(x,y(x)))\;dx
\nonumber \\ &=& \int_\Omega \mathcal{E}(x,y(x), \theta(x,y(x)), \nabla y(x))\;dx,
\end{eqnarray}
 where\begin{eqnarray}\mathcal{E}(x,y(x),\theta(x,y(x)), \nabla y(x))&=&f(x,y(x),\nabla y(x)) \nonumber \\ &&-f(x,y(x),\theta(x,y))\nonumber \\ &&-\sum_{j=1}^nf_{z_j}(x,y(x),\theta(x,y(x))(y_{x_j}(x)-\theta_j(x,y(x))),\end{eqnarray}
is the Weierstrass Excess function.

With such results, we may prove the following theorem.

\begin{thm} Let $\Omega \subset \mathbb{R}^n$ be an open, bounded and simply connected set, with a regular (Lipschitzian) boundary denoted by
$\partial \Omega$. Let $V=C^1(\overline{\Omega})$ and let $f \in C^1(\overline{\Omega} \times \mathbb{R} \times \mathbb{R}^n)$ be such that $f(\underline{x},\underline{y},\mathbf{z})$ is convex. Let $F:D \rightarrow \mathbb{R}$ be defined by
$$F(y)=\int_\Omega f(x,y(x),\nabla y(x))\;dx,$$ where $$D=\{y \in V\;:\; y=y_1 \text{ em } \partial \Omega\}.$$

Let $y_0 \in D$ be a stationary function for $f$ which may be extended to $B_0$, being kept stationary in $B_0$, where $B_0$ has been specified above in this section.  Suppose we may define a field $\theta : D_1 \rightarrow \mathbb{R}^{n}$, also as it has been specified above in this section. Assume  $D_1 \subset \mathbb{R}^{n+1}$
 contains an open set which contains $\overline{\mathcal{C}_0}$, where

$$\mathcal{C}_0=\{(x,y_0(x))\;:\; x \in \Omega\}.$$

Under such hypotheses, there exists $\delta>0$ such that
$$F(y) \geq F(y_0),\; \forall y \in B_\delta(y_0)\cap D,$$

where $$B_{\delta}(y_0)=\{y \in V\;:\; \|y-y_0\|_\infty <\delta\}.$$
\end{thm}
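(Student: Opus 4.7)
The plan is to combine the identity
$$F(y)-F(y_0)=\int_\Omega \mathcal{E}(x,y(x),\theta(x,y(x)),\nabla y(x))\,dx$$
derived above with the convexity of $f$ in $\mathbf{z}$, after a preliminary tubular-neighborhood argument that guarantees the field $\theta$ can be evaluated along the graph of every admissible $y$ sufficiently close to $y_0$.

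For the preliminary step, I would observe that $\overline{\mathcal{C}_0}$ is compact (as the image of $\overline{\Omega}$ under the continuous map $x\mapsto(x,y_0(x))$) and, by hypothesis, is contained in an open set $U\subset D_1$. Hence $\delta:=\mathrm{dist}(\overline{\mathcal{C}_0},\mathbb{R}^{n+1}\setminus U)>0$, and for any $y\in B_\delta(y_0)\cap D$ and any $x\in\overline{\Omega}$ the point $(x,y(x))$ lies within distance $\delta$ of $(x,y_0(x))\in\overline{\mathcal{C}_0}$, hence in $U\subset D_1$. Consequently, $\theta(x,y(x))$ is well-defined and continuous, the functions $h$, $P_j$, $H_j$ and the generalized Hilbert integral $I(y)$ are all meaningful, and the derivation preceding the theorem applies verbatim. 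Since $y$ and $y_0$ agree on $\partial\Omega$, the boundary representation of $I$ yields $I(y)=I(y_0)=F(y_0)$, from which the displayed identity follows.

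With this in hand, convexity of $f(\underline{x},\underline{y},\mathbf{z})$ in $\mathbf{z}$ gives the pointwise tangent inequality
$$f(x,y,\mathbf{q})\geq f(x,y,\mathbf{p})+\sum_{j=1}^n f_{z_j}(x,y,\mathbf{p})(q_j-p_j)$$
for every admissible argument, which is exactly $\mathcal{E}(x,y,\mathbf{p},\mathbf{q})\geq 0$. Setting $\mathbf{p}=\theta(x,y(x))$ and $\mathbf{q}=\nabla y(x)$ and integrating over $\Omega$ produces $F(y)\geq F(y_0)$, as required.

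The main obstacle is really only the first step: identifying a uniform $\delta$ for which all graphs remain inside $D_1$, and checking that the boundary-integral identification of $I(y)$ with $I(y_0)$ is insensitive to the particular admissible function chosen. Once these bookkeeping items are cleared, the minimality conclusion is a one-line consequence of pointwise convexity of $f$ in $\mathbf{z}$; no strict convexity and no quadratic lower bound are needed for the non-strict inequality stated.
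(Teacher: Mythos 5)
Your proposal is correct and follows essentially the same route as the paper: it invokes the identity $F(y)-F(y_0)=\int_\Omega \mathcal{E}(x,y(x),\theta(x,y(x)),\nabla y(x))\,dx$ obtained from the invariance of the generalized Hilbert integral, and then concludes from the convexity of $f$ in $\mathbf{z}$ that the Weierstrass excess is pointwise nonnegative. The only difference is cosmetic: you make explicit, via compactness of $\overline{\mathcal{C}_0}$ and a positive distance to the complement of the open set $U\subset D_1$, the existence of the uniform $\delta$ that the paper simply asserts ``from the hypotheses and from the exposed above in this section.''
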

\begin{proof}
From the hypotheses and from the exposed above in this section, there exists $\delta>0$ such that $\theta(x,y(x))$ is well defined for all  $y \in D$ such that $$\|y-y_0\|_\infty< \delta.$$

Let $y \in B_\delta(y_0)\cap D.$ Thus,
 since $f(\underline{x},\underline{y},\mathbf{z})$ is convex, we have  \begin{eqnarray}\mathcal{E}(x,y(x),\theta(x,y(x)), \nabla y(x))&=&f(x,y(x),\nabla y(x)) \nonumber \\ &&-f(x,y(x)\theta(x,y))\nonumber \\ &&-\sum_{j=1}^nf_{z_j}(x,y(x),\theta(x,y(x))(y_{x_j}(x)-\theta_j(x,y(x))) \nonumber \\ &\geq& 0, \text{ in } \Omega\end{eqnarray}
so that,
$$F(y)-F(y_0)=\int_\Omega \mathcal{E}(x,y(x),\theta(x,y(x)),\nabla y(x))\;dx  \geq 0, \; \forall y \in B_\delta(y_0)\cap D.$$
\end{proof}
The proof is complete.
\section{Central fields and the vectorial case in the calculus of variations}

Let $\Omega \subset \mathbb{R}^n$ be an open, bounded, simply connected set with a regular (Lipschitzian) boundary denoted by $\partial \Omega$.

Let $f \in C^1(\overline{\Omega} \times \mathbb{R}^N \times \mathbb{R}^{Nn})$ ($f(x,\mathbf{y},\mathbf{z})$) and $V=C^1(\overline{\Omega};\mathbb{R}^N).$
We suppose $f$ is convex in $\mathbf{z}$, which we denote by $f(\underline{x},\underline{\mathbf{y}},\mathbf{z})$  be convex.

  Choose $\tilde{x}_1,\ldots, \tilde{x}_n,\tilde{y}_1,\ldots,\tilde{y}_n \in \mathbb{R}$ such that if $x=(x_1,\ldots,x_n) \in \Omega$, then $$\tilde{x}_k<x_k<\tilde{y}_k \; \forall k \in \{1, \ldots, n\}$$

Suppose that $\mathbf{y}_0 \in V_1=C^1(B_0;\mathbb{R}^N)$ is stationary for
$f$ in $B_0=\prod_{k=1}^n [\tilde{x}_k, \tilde{y}_k]$, that is, suppose that $$\sum_{k=1}^n\frac{d}{dx_k} f_{z_{jk}}(x,\mathbf{y}_0(x),\nabla \mathbf{y}_0(x))=f_{y_j}(x,\mathbf{y}_0(x),\nabla \mathbf{y}_0(x)),\; \forall x \in B_0
, \; \forall j \in \{1, \ldots,N\}.$$
We also assume $f$ to be of $C^1$ class in $B_0 \times \mathbb{R}^N \times \mathbb{R}^{Nn}$.

Consider the problem of minimizing $F:D \rightarrow \mathbb{R}$ where
$$F(\mathbf{y})=\int_\Omega f(x,\mathbf{y}(x), \nabla \mathbf{y}(x))\;dx,$$ and where
$$D=\{ \mathbf{y} \in V\;:\; \mathbf{y}=\mathbf{y}_0 \text{ in } \partial \Omega\}.$$

Assume there exists a family of stationary functions $\mathcal{F}$, such that for each $(x,\mathbf{y}) \in D_1 \subset \mathbb{R}^{n+N}$,
there exists a unique stationary function $\mathbf{y}_3 \in V_1$ for $f$ in $\mathcal{F}$, such that
$$(x,\mathbf{y})=(x,\mathbf{y}_3(x)).$$

More specifically, we define $\mathcal{F}$ as a subset of $\mathcal{F}_1$, where
$$\mathcal{F}_1=\{\Phi(t,\Lambda(x,\mathbf{y})) \text{ stationary for } f \text{ such that } \Phi(x,\Lambda(x,\mathbf{y}))=\mathbf{y} \text{ for a }
\Lambda \in B_r(\mathbf{0}) \subset \mathbb{R}^N\}$$ such that $B_r(\mathbf{0})$ is an open ball of center $\mathbf{0}$ and radius $r$, for some $r>0$.

Here, $\Phi$ is stationary        and  $$\Phi((t_1,\ldots,t_k=\tilde{x}_k,\ldots,t_n),\Lambda(x,\mathbf{y}))=\mathbf{y}_0(t_1,\ldots,t_k=\tilde{x}_k,\ldots,t_n), \text{ on } \partial B_0$$
and $$\nabla (\Phi)_j((t_1,\ldots,t_k=\tilde{x}_k,\ldots,t_n),\Lambda(x,\mathbf{y}))\cdot \mathbf{n}=\Lambda_j, \text{ on } \partial B_0,\; \forall k \in \{1,\ldots,n\},\;
 j \in \{1,\ldots,N\}.$$
Here, $\mathbf{n}$ denotes the outward normal field to $\partial B_0.$

Define the field $\theta :D_1 \subset \mathbb{R}^{n+N} \rightarrow \mathbb{R}^{Nn}$ by $$\theta_j(x,\mathbf{y})=\nabla (y_3)_j(x),$$ where
as above indicated, $\mathbf{y}_3$ is such that
$$(x,\mathbf{y})=(x,\mathbf{y}_3(x)),$$ and thus $$\theta_j(x,(\mathbf{y}_3)(x))=\nabla (y_3)_j(x), \; \forall x \in B_0.$$

At this point we assume the hypotheses of the implicit function theorem so that $\Phi_j(x,\Lambda(x,\mathbf{y}))$ is of $C^1$ class and $\theta_j(x,\mathbf{y})$ is continuous (in fact, since $\Phi_j$ is stationary, the partial derivatives of $\theta_j(x,\mathbf{y})$ are well defined, \; $\forall j \in \{1,\ldots,N\}$).

Define also $$h(x,\mathbf{y})=f(x,\mathbf{y},\theta(x,\mathbf{y}))-\sum_{j=1}^n\sum_{k=1}^nf_{z_{jk}}(x,y,\theta(x,\mathbf{y})) \theta_{jk}(x,\mathbf{y})$$
and $$P_{jk}(x,\mathbf{y})=f_{z_{jk}}(x,\mathbf{y},\theta(x,\mathbf{y})),\; \forall j \in \{1, \ldots,N\},\; k \in \{1, \ldots,n\}.$$

Observe that \begin{eqnarray}h_{y_j}(x,\mathbf{y}_3(x))&=&f_{y_j}(x,\mathbf{y}_3(x),\theta(x,\mathbf{y}_3(x)))+\sum_{l=1}^N\sum_{k=1}^n f_{z_{lk}}(x,\mathbf{y}_3(x),\theta(x,\mathbf{y}_3(x))) (\theta_{lk})_{y_j}(x,\mathbf{y}_3(x))
\nonumber \\ &&- \sum_{l=1}^N \sum_{k=1}^n \{(P_{lk})_{y_j}(x,\mathbf{y}_3(x))\theta_{lk}(x,\mathbf{y}_3(x))+P_{lk}(x,\mathbf{y}_3(x))(\theta_{lk})_{y_j}
(x,\mathbf{y}_3(x))\} \nonumber \\ &=&
f_{y_j}(x,\mathbf{y}_3(x),\theta(x,\mathbf{y}_3(x)))+\sum_{l=1}^N\sum_{k=1}^n P_{lk}(x,\mathbf{y}_3(x)) (\theta_{lk})_{y_j}(x,\mathbf{y}_3(x))
\nonumber \\ &&- \sum_{j=1}^n \{(P_j)_y(x,\mathbf{y}_3(x))\theta_j(x,\mathbf{y}_3(x))+P_j(x,\mathbf{y}_3(x))(\theta_j)_y(x,\mathbf{y}_3(x))\}
\nonumber \\ &=&f_{y_j}(x,\mathbf{y}_3(x),\theta(x,\mathbf{y}_3(x)))- \sum_{l=1}^N \sum_{k=1}^n (P_{lk})_{y_j}(x,\mathbf{y}_3(x))\theta_{lk}(x,\mathbf{y}_3(x)). \end{eqnarray}

Thus, $$h_{y_j}(x,\mathbf{y})=f_{y_j}(x,\mathbf{y},\theta(x,\mathbf{y}))- \sum_{l=1}^N\sum_{k=1}^n (P_{lk})_{y_j}(x,\mathbf{y})\theta_{lk}(x,\mathbf{y}),\;
\forall (x,\mathbf{y}) \in D_1.$$

On the other hand, considering that $\mathbf{y}_3(x)$ is stationary, we obtain $H_k(x,\mathbf{y})$ such that $$P_{jk}(x, \mathbf{y})=\frac{\partial H_k(x,\mathbf{y})}{\partial y_j},\; \forall j \in \{1, \ldots,N\},\; k \in \{1, \ldots,n\}.$$

Indeed, we define $$H_k(x,\mathbf{y})=sta_{\phi \in \hat{D}_k}\int_{\tilde{x}_k}^{x_k}f(x_1,\ldots,t_k, \ldots, x_n,\phi_1(t_k),\ldots,\phi_N(t_k),
\nabla \tilde{\phi}(x,t_k))\;dt_k,$$
 where, denoting $\tilde{t}_k=(x_1, \ldots, t_k, \ldots,x_n)$, we have that
\begin{equation}\nabla \tilde{\phi}(x,t_k)=\left[
\begin{array}{cccccc}
 ((y_3)_1)_{x_1}(\tilde{t}_k) &  ((y_3)_1)_{x_2}(\tilde{t}_k)& \cdots & \frac{\partial \phi_1(t_k)}{\partial t_k}& \cdots &   ((y_3)_1)_{x_n}(\tilde{t}_k)
 \\
 ((y_3)_2)_{x_1}(\tilde{t}_k) &  ((y_3)_2)_{x_2}(\tilde{t}_k)& \cdots & \frac{\partial \phi_2(t_k)}{\partial t_k}& \cdots &   ((y_3)_2)_{x_n}(\tilde{t}_k)
 \\
 \vdots & \vdots &  \cdots & \vdots & \ddots & \vdots
 \\
 ((y_3)_N)_{x_1}(\tilde{t}_k) &  ((y_3)_N)_{x_2}(\tilde{t}_k)& \cdots & \frac{\partial \phi_N(t_k)}{\partial t_k}& \cdots &   ((y_3)_N)_{x_n}(\tilde{t}_k)
  \end{array} \right]_{N \times n},\end{equation}
and where
$$\hat{D}_k=\{\phi \in C^1([\tilde{x}_k,x_k];\mathbb{R}^N)\;:\; \phi(\tilde{x}_k)=\mathbf{y}_3(\tilde{x}_k) \text{ and } \phi(x_k)=\mathbf{y}_3(x_k)\},$$
$\forall k \in \{1,\ldots,n\}.$

Observe that, since $\phi(t_k)$ is stationary, we have

\begin{equation}\label{uk001}f_{y_j}[\phi(t_k)]-\frac{d }{dt_k} f_{z_{jk}}[\phi(t_k)]=0, \text{ in } [\tilde{x}_k,x_k], \; \forall j \in \{1,\ldots,N\}
\end{equation}
where generically, we denote $$f[\phi(t_k)]=f(x,\phi(t_k),\nabla \tilde{\phi}(t_k)).$$

Observe that from these Euler-Lagrange equations we may obviously obtain $$\phi(t_k)=\mathbf{y}_3(x_1,\ldots, t_k, \ldots, x_n).$$

At this point we shall also denote
$$\phi_j(t_k)\equiv \phi_j(t_k,\tilde{\Lambda}(x,\mathbf{y})),$$ where
$$\phi_j(\tilde{x}_k,\tilde{\Lambda}(x,\mathbf{y}))=(y_3)_j(x_1,\ldots,\tilde{x}_k, \ldots, x_n),$$

$$\frac{\partial \phi_j(\tilde{x}_k, \tilde{\Lambda}(x ,\mathbf{y}))}{\partial t_k}=\tilde{\Lambda}_j,$$
and where $\tilde{\Lambda}(x,\mathbf{y})$ is such that
$$\phi_j(x_k,\tilde{\Lambda}(x,\mathbf{y}))=(y_3)_j(x_1,\ldots,x_k, \ldots, x_n).$$

Therefore,

$$H_k(x,\mathbf{y})=\int_{\tilde{x}_k}^{x_k} f[\phi(t_k, \tilde{\Lambda}(x,\mathbf{y})]\;dt_k,$$
and thus,

\begin{eqnarray}[H_k(x,\mathbf{y})]_{y_j}&=&\int_{\tilde{x}_k}^{x_k} f_{y_l}[\phi(t_k, \tilde{\Lambda}(x,\mathbf{y})](\phi_l)_{\tilde{\Lambda}} \tilde{\Lambda}_{y_j}(x,\mathbf{y})\;dt_k \nonumber \\ &&+\int_{\tilde{x}_k}^{x_k} f_{z_{lk}}[\phi(t_k, \tilde{\Lambda}(x,\mathbf{y})](\phi_l)_{t_k \tilde{\Lambda}} \tilde{\Lambda}_{y_j}(x,\mathbf{y})\;dt_k \nonumber \\ &=&\int_{\tilde{x}_k}^{x_k} f_{y_l}[\phi(t_k, \tilde{\Lambda}(x,\mathbf{y})](\phi_l)_{\tilde{\Lambda}} \tilde{\Lambda}_{y_j}(x,\mathbf{y})\;dt_k \nonumber \\ &&+\int_{\tilde{x}_k}^{x_k} f_{z_{lk}}[\phi(t_k, \tilde{\Lambda}(x,\mathbf{y})][(\phi_l)_{ \tilde{\Lambda}}]_{t_k} \tilde{\Lambda}_{y_j}(x,\mathbf{y})\;dt_k. \end{eqnarray}

From this and (\ref{uk001}), we obtain \begin{eqnarray}\label{us001}[H_k(x,\mathbf{y})]_{y_j}&=&\int_{\tilde{x}_k}^{x_k} \frac{d}{dt_k}f_{z_{lk}}[\phi(t_k, \tilde{\Lambda}(x,\mathbf{y})](\phi_l)_{\tilde{\Lambda}} \tilde{\Lambda}_{y_j}(x,\mathbf{y})\;dt_k \nonumber \\ &&+\int_{\tilde{x}_k}^{x_k} f_{z_{lk}}[\phi(t_k, \tilde{\Lambda}(x,\mathbf{y})][(\phi_l)_{\tilde{\Lambda}}]_{t_k} \tilde{\Lambda}_{y_j}(x,\mathbf{y})\;dt_k \nonumber \\ &=&\int_{\tilde{x}_k}^{x_k} \frac{d}{d t_k}\{f_{z_{lk}}[\phi(t_k, \tilde{\Lambda}(x,\mathbf{y})](\phi_l)_{ \tilde{\Lambda}}]\}\;dt_k \tilde{\Lambda}_{y_j}(x,\mathbf{y}) \nonumber \\ &=& \{f_{z_{lk}}[\phi(t_k, \tilde{\Lambda}(x,\mathbf{y})](\phi_l)_{ \tilde{\Lambda}}]\}|_{t_k=\tilde{x_k}}^{t_k=x_k}\tilde{\Lambda}_{y_j}(x,\mathbf{y}) \end{eqnarray}

On the other hand,

$$\phi_l(\tilde{x}_k, \tilde{\Lambda}(x,\mathbf{y}))=(y_3)_l(x_1, \ldots,\tilde{x}_k, \ldots, x_n)=(y_0)_l(x_1, \ldots, \tilde{x}_k,\ldots, x_n),$$
which does not depend on  $\tilde{\Lambda}$, so that $$(\phi_l)_{\Lambda}(\tilde{x}_k,\tilde{\Lambda}(x,\mathbf{y}))= \mathbf{0}.$$

Also, $$\phi_l(x_k, \tilde{\Lambda}(x,\mathbf{y}))=y_l$$ and thus
$$[\phi_l(x_k, \tilde{\Lambda}(x,\mathbf{y}))]_{y_j}=\frac{\partial y_l}{\partial y_j}=\delta_{lj}.$$

Hence,
$$(\phi_l)_{\tilde{\Lambda}}(x_k,\tilde{\Lambda}(x,\mathbf{y}))\tilde{\Lambda}_{y_j}= \delta_{lj}.$$

From these last results and from (\ref{us001}), we obtain,

\begin{eqnarray}[H_k(x,\mathbf{y})]_{y_j}&=&f_{z_{lk}}[\phi(x_k,\tilde{\Lambda}(x,\mathbf{y}))] \delta_{lj} \nonumber \\
&=& f_{z_{jk}}[\phi(x_k,\tilde{\Lambda}(x,\mathbf{y}))] \nonumber \\ &=& f_{z_{jk}}(x,\mathbf{y}, \theta(x,\mathbf{y}))
\nonumber \\ &=& P_{jk}(x,\mathbf{y}),\; \forall j \in \{1,\ldots,N\}, \; k \in \{1,\ldots, n\}.
\end{eqnarray}

Therefore,

\begin{eqnarray}
0&=& f_{y_j}(x,\mathbf{y}_3(x),\nabla \mathbf{y}_3(x))-\sum_{k=1}^n\frac{d}{dx_k} f_{z_{jk}}(x,\mathbf{y}_3(x),\nabla \mathbf{y}_3(x)) \nonumber \\ &=&
h_{y_j}(x,\mathbf{y}_3(x))+ \sum_{l=1}^N\sum_{k=1}^n(P_{lk})_{y_j}(x,\mathbf{y}_3(x))\theta_{lk}(x,\mathbf{y}_3(x))\nonumber \\ &&-\sum_{k=1}^n\left(\frac{\partial P_{jk}(x,\mathbf{y}_3(x))}{\partial x_k}+\sum_{l=1}^N(P_{jk})_{y_l}(x,\mathbf{y}_3(x))\frac{\partial (y_3)_l(x)}{\partial x_k}\right)
\nonumber \\ &=&h_{y_j}(x,\mathbf{y}_3(x))+ \sum_{l=1}^N\sum_{k=1}^n (P_{lk})_{y_j}(x,\mathbf{y}_3(x))\theta_{lk}(x,\mathbf{y}_3(x)) \nonumber \\ &&-\sum_{k=1}^n\left(\frac{\partial P_{jk}(x,\mathbf{y}_3(x))}{\partial x_k}+\sum_{l=1}^N(P_{jk})_{y_l}(x,\mathbf{y}_3(x))\theta_{lk}(x,\mathbf{y}_3(x))\right) \nonumber \\ &=&
 h_{y_j}(x,\mathbf{y}_3(x))-\sum_{k=1}^n\left(\frac{\partial (H_k)_{y_j}(x,\mathbf{y}_3(x))}{\partial x_k}\right).
 \end{eqnarray}

 Thus, $$h_{y_j}(x,\mathbf{y})=\sum_{k=1}^n\left(\frac{\partial H_k(x,\mathbf{y})}{\partial x_k}\right)_{y_j}, \forall (x,\mathbf{y}) \in D_1$$
so that
$$h(x,\mathbf{y})=\sum_{k=1}^n\frac{\partial H_k(x,\mathbf{y})}{\partial x_k}+W(x),$$
for some $W:B_0 \rightarrow \mathbb{R}.$

Hence, assuming $D_1$ contains an open set which contains $\overline{\mathcal{C}_0}$, where $$\mathcal{C}_0=\{(x,\mathbf{y}_0(x))\;:\; x \in \Omega\},$$ for $\mathbf{y} \in D$ sufficiently close to $\mathbf{y}_0$ in $L^\infty$ norm, the generalized Hilbert integral, denoted by $I(\mathbf{y})$, will be defined as
\begin{eqnarray}
I(\mathbf{y})&=& \int_\Omega h(x,\mathbf{y}(x))\;dx+ \sum_{j=1}^N \sum_{k=1}^n P_{jk}(x,\mathbf{y}(x))\frac{\partial y_j(x)}{\partial x_k}\;dx \nonumber \\ &=&
\int_\Omega \sum_{k=1}^n\left(\frac{\partial H_k(x,\mathbf{y}(x))}{\partial x_k}+\sum_{j=1}^n(H_k)_{y_j}(x,\mathbf{y}(x))\frac{\partial y_j(x)}{\partial x_k}\right)\;dx
+\int_\Omega W(x)\;dx \nonumber \\ &=& \int_\Omega \sum_{k=1}^n \frac{d H_k(x,\mathbf{y}(x))}{dx_k}\;dx+\int_\Omega W(x)\;dx
\nonumber \\ &=& \int_{\partial \Omega} \sum_{k=1}^n (-1)^{k+1} H_j(x,\mathbf{y}(x))dx_1\wedge\cdots\wedge \widehat{dx_j} \wedge \cdots \wedge dx_n +\int_\Omega W(x)\;dx\nonumber \\
&=& W_1(\mathbf{y}|_{\partial \Omega})\nonumber \\ &=& W_1((\mathbf{y}_1)|_{\partial \Omega}),
\end{eqnarray}
so that such an integral is invariant, that is, it does not depend on $\mathbf{y}.$

Finally, observe that

$$F(\mathbf{y})-F(\mathbf{y}_0) =
\int_\Omega f(x,\mathbf{y}(x),\nabla \mathbf{y}(x))\;dx-\int_\Omega f(x,\mathbf{y}_0(x),\nabla \mathbf{y}_0(x))\;dx.$$

On the other hand,

\begin{eqnarray}
I(\mathbf{y}) &=& \int_\Omega f(x,\mathbf{y}(x),\theta(x,\mathbf{y}(x)))\;dx\nonumber \\ && +\sum_{j=1}^N\sum_{k=1}^n\int_\Omega f_{z_{jk}}(x,\mathbf{y}(x),\theta(x,\mathbf{y}(x))(\theta_{jk}(x,\mathbf{y}(x))-(y_j)_{x_k}(x))\;dx
\nonumber \\ &=&I(\mathbf{y}_0) \nonumber \\ &=&\int_\Omega f(x,\mathbf{y}_0(x),\theta(x,\mathbf{y}_0(x)))\;dx
\nonumber \\ &=& \int_\Omega f(x,\mathbf{y}_0(x),\nabla \mathbf{y}_0(x))\;dx\nonumber \\ &=& F(\mathbf{y}_0).
\end{eqnarray}

Thus,
\begin{eqnarray}
F(\mathbf{y})-F(\mathbf{y}_0)&=& \int_\Omega [f(x,\mathbf{y}(x),\nabla \mathbf{y}(x)) -f(x,\mathbf{y}(x),\theta(x,\mathbf{y}(x)))]\;dx
\nonumber \\ &&-\sum_{j=1}^N \sum_{k=1}^n\int_\Omega f_{z_{jk}}(x,\mathbf{y}(x),\theta(x,\mathbf{y}(x))((y_j)_{x_k}(x)-\theta_{jk}(x,\mathbf{y}(x)))\;dx
\nonumber \\ &=& \int_\Omega \mathcal{E}(x,\mathbf{y}(x), \theta(x,\mathbf{y}(x)), \nabla \mathbf{y}(x))\;dx,
\end{eqnarray}
 where
 \begin{eqnarray}\mathcal{E}(x,\mathbf{y}(x),\theta(x,\mathbf{y}(x)), \nabla \mathbf{y}(x))&=&f(x,\mathbf{y}(x),\nabla \mathbf{y}(x)) \nonumber \\ &&-f(x,\mathbf{y}(x),\theta(x,\mathbf{y}(x)))\nonumber \\ &&-\sum_{j=1}^N \sum_{k=1}^nf_{z_{jk}}(x,\mathbf{y}(x),\theta(x,\mathbf{y}(x))((y_j)_{x_k}(x)-\theta_{jk}(x,\mathbf{y}(x)))\nonumber\end{eqnarray}
is the Weierstrass Excess function.

With such results, we may prove the following result.

\begin{thm} Let $\Omega \subset \mathbb{R}^n$ be an open, bounded, simply connected set with a regular (Lipschitzian) boundary denoted by
$\partial \Omega$. Let $V=C^1(\overline{\Omega};\mathbb{R}^N)$ and let $f \in C^1(\overline{\Omega} \times \mathbb{R}^N \times \mathbb{R}^{Nn})$ be such that
$f(\underline{x},\underline{\mathbf{y}},\mathbf{z})$ is convex. Let $F:D \rightarrow \mathbb{R}$ be defined by
$$F(\mathbf{y})=\int_\Omega f(x,\mathbf{y}(x),\nabla \mathbf{y}(x))\;dx,$$ where $$D=\{\mathbf{y} \in V\;:\; \mathbf{y}=\mathbf{y}_1 \text{ on } \partial \Omega\}.$$

Let $\mathbf{y}_0 \in D$ be an stationary function for $f$ which may be extended to $B_0$, keeping it stationary in $B_0$, where $B_0$ has been
specified above in this section. Suppose we may define a field $\theta : D_1 \rightarrow \mathbb{R}^{Nn}$, also as specified above in this section. Assume  $D_1 \subset \mathbb{R}^{n+N}$
 contains an open set which contains $\overline{\mathcal{C}_0}$, where

$$\mathcal{C}_0=\{(x,\mathbf{y}_0(x))\;:\; x \in \Omega\}.$$

Under such hypotheses, there exists $\delta>0$ such that
$$F(\mathbf{y}) \geq F(\mathbf{y}_0),\; \forall \mathbf{y} \in B_\delta(\mathbf{y}_0)\cap D,$$

where $$B_{\delta}(\mathbf{y}_0)=\{\mathbf{y} \in V\;:\; \|\mathbf{y}-\mathbf{y}_0\|_\infty <\delta\}.$$
\end{thm}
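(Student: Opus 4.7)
The plan is to mirror the scalar proof essentially verbatim, since all the hard analytic work (constructing $\theta$, showing the generalized Hilbert integral is invariant, and expressing $F(\mathbf{y})-F(\mathbf{y}_0)$ as an integral of the Weierstrass Excess function $\mathcal{E}$) has already been carried out in the preceding development of the section. What remains is a short two-step argument: first, a choice of $\delta$ making the field composition $\theta(x,\mathbf{y}(x))$ legitimate, and second, a pointwise application of convexity of $f$ in $\mathbf{z}$.

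For the first step, I would invoke that $D_1$ contains an open set $U$ with $\overline{\mathcal{C}_0}\subset U$ and that $\overline{\mathcal{C}_0}$ is compact. A standard tubular argument then yields $\delta>0$ such that whenever $\mathbf{y}\in V$ satisfies $\|\mathbf{y}-\mathbf{y}_0\|_\infty<\delta$, the graph $\{(x,\mathbf{y}(x)):x\in\overline{\Omega}\}$ is contained in $U\subset D_1$. Consequently $\theta(x,\mathbf{y}(x))$ is well-defined and continuous on $\overline{\Omega}$, so all the integrals appearing in the identity
\[
F(\mathbf{y})-F(\mathbf{y}_0)=\int_\Omega \mathcal{E}(x,\mathbf{y}(x),\theta(x,\mathbf{y}(x)),\nabla\mathbf{y}(x))\,dx
\]
derived just above the theorem are meaningful.

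For the second step, I would invoke convexity of $f(\underline{x},\underline{\mathbf{y}},\mathbf{z})$ in $\mathbf{z}$: for fixed $(x,\mathbf{y})$ and any pair of matrices $\mathbf{z},\mathbf{z}'\in\mathbb{R}^{Nn}$,
\[
f(x,\mathbf{y},\mathbf{z})\;\geq\; f(x,\mathbf{y},\mathbf{z}')+\sum_{j=1}^{N}\sum_{k=1}^{n}f_{z_{jk}}(x,\mathbf{y},\mathbf{z}')\,(z_{jk}-z'_{jk}).
\]
Applying this with $\mathbf{z}=\nabla\mathbf{y}(x)$ and $\mathbf{z}'=\theta(x,\mathbf{y}(x))$ gives exactly $\mathcal{E}(x,\mathbf{y}(x),\theta(x,\mathbf{y}(x)),\nabla\mathbf{y}(x))\geq 0$ pointwise in $\Omega$, and integrating yields $F(\mathbf{y})\geq F(\mathbf{y}_0)$ for every $\mathbf{y}\in B_\delta(\mathbf{y}_0)\cap D$.

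The only step that is not purely mechanical is the tubular/compactness argument producing $\delta$; in particular one needs $\overline{\mathcal{C}_0}$ to be compact, which follows from $\overline{\Omega}$ being compact together with the continuity of $\mathbf{y}_0$ extended to $\overline{\Omega}$. Everything else reduces to quoting the convexity inequality and the invariance identity already proved, so there is no substantive new obstacle beyond what was needed for the scalar Theorem in the previous section.
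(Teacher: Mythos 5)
Your proposal is correct and follows essentially the same route as the paper: choose $\delta$ so that the graph of $\mathbf{y}$ stays in $D_1$ (hence $\theta(x,\mathbf{y}(x))$ is well defined), then use convexity of $f$ in $\mathbf{z}$ to conclude the Weierstrass Excess function is nonnegative and integrate, invoking the previously established identity $F(\mathbf{y})-F(\mathbf{y}_0)=\int_\Omega \mathcal{E}\,dx$. Your explicit compactness/tubular argument for the existence of $\delta$ is in fact slightly more detailed than the paper's one-line assertion, but it is the same idea.
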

\begin{proof}
From the hypotheses and from the exposed above in this section, there exists $\delta>0$ such that $\theta(x,\mathbf{y}(x))$ is well defined for each $\mathbf{y} \in D$ such that $$\|\mathbf{y}-\mathbf{y}_0\|_\infty< \delta.$$

Let $\mathbf{y} \in B_\delta(\mathbf{y}_0)\cap D.$ Thus,
 since  $f(\underline{x},\underline{\mathbf{y}},\mathbf{z})$ is convex, we have
 \begin{eqnarray}\mathcal{E}(x,\mathbf{y}(x),\theta(x,\mathbf{y}(x)), \nabla \mathbf{y}(x))&=&f(x,\mathbf{y}(x),\nabla \mathbf{y}(x)) \nonumber \\ &&-f(x,\mathbf{y}(x),\theta(x,\mathbf{y}(x)))\nonumber \\ &&-\sum_{j=1}^N \sum_{k=1}^nf_{z_{jk}}(x,\mathbf{y}(x),\theta(x,\mathbf{y}(x))((y_j)_{x_j}(x)
 -\theta_{jk}(x,\mathbf{y}(x))) \nonumber \\ &\geq& 0, \text{ in } \Omega\end{eqnarray}
so that,
$$F(\mathbf{y})-F(\mathbf{y}_0)=\int_\Omega \mathcal{E}(x,\mathbf{y}(x),\theta(x,\mathbf{y}(x)),\nabla \mathbf{y}(x))\;dx  \geq 0, \; \forall \mathbf{y} \in B_\delta(\mathbf{y}_0)\cap D.$$

The proof is complete.
\end{proof}


\begin{thebibliography}{}
%
%
\bibitem{1}
R.A. Adams and J.F. Fournier, Sobolev Spaces, 2nd edn.
 (Elsevier, New York, 2003).


 \bibitem{12}
F. Botelho, Functional Analysis and Applied Optimization in Banach Spaces,
 (Springer Switzerland, 2014).


\bibitem{12a}
 J. Troutman,  Variational Calculus and Optimal Control, Springer, New York, 1996.


\end{thebibliography}
\end{document}